\theoremstyle{plain}
\newtheorem{thm}{Theorem}
\newtheorem{cor}[thm]{Corollary}
\newtheorem{lem}[thm]{Lemma}
\newtheorem{conj}{Conjecture}
\newtheorem{fact}{Fact}
\theoremstyle{definition}
\theoremstyle{remark}
\newtheorem{rmk}{Remark}
\newtheorem*{acks}{Acknowledgements}
\newcommand{\ie}{\textit{i.e. }}
\newcommand{\p}{\mathbb{P}}
\newcommand{\OO}{\mathcal{O}}
\newcommand{\II}{\mathcal{I}}
\DeclareMathOperator{\length}{length}
\DeclareMathOperator{\sm}{sm}
\def\cocoa{{\hbox{\rm C\kern-.13em o\kern-.07em C\kern-.13em o\kern-.15em A}}}
\begin{document}

\title[On the quadratic normality and the triple curve]
{On the quadratic normality and the triple curve of three dimensional
subvarieties of $\p^5$}
\author{Pietro De Poi, Emilia Mezzetti \and Jos\'e Carlos
Sierra}

\address{\hskip -.43cm Pietro De Poi, Dipartimento di Sistemi e
istituzioni per l'economia, Universit\`a degli Studi dell'Aquila, Piazza del Santuario
19, I-67040 Roio Poggio (AQ), Italy. e-mail \url{depoi@dmi.units.it}}

\address{\hskip -.43cm Emilia Mezzetti, Dipartimento di Matematica e
Informatica, Universit\`a degli Studi di Trieste, Via Valerio
12/1, I-34127 Trieste, Italy. e-mail \url{ mezzette@units.it}}

\address{\hskip -.43cm Jos\'e Carlos Sierra, Departamento de
\'Algebra, Facultad de Ciencias Matem\'aticas, Universidad
Complutense de Madrid, 28040 Madrid, Spain. e-mail \url{
jcsierra@mat.ucm.es}}

\thanks{This research was partially supported by MiUR,
project  ``Geometria delle variet\`a algebriche e dei loro spazi di
moduli'' for the first two authors, by Regione Friuli Venezia
Giulia, ``Progetto D4'' for the first author, by Indam project
``Birational geometry of projective varieties'' for the second one,
and by the Spanish MEC project MTM2006-04785 for the third one}

\subjclass[2000]{Primary 14M07. Secondary 14N05}

\date{\today}

\begin{abstract}
A well-known conjecture asserts that smooth threefolds
$X\subset\p^5$ are quadratically normal with the only exception of
the Palatini scroll. As a corollary of a more general statement we
obtain the following result, which is related to the previous
conjecture: If $X\subset\p^5$ is not quadratically normal, then its
triple curve is reducible. Similar results are also given for higher
dimensional varieties.
\end{abstract}

\maketitle


\section{Introduction}

Let $X\subset\p^r$ be a reduced irreducible complex projective
subvariety. $X$ is said to be $k$-normal if the restriction map
$H^0(\p^r,\OO_{\p^r}(k))\to H^0(X,\OO_X(k))$ is surjective or,
equivalently, if $H^1(\p^r,\II_X(k))=0$.

A conjecture of Peskine and Van de Ven states that a smooth
threefold $X\subset\p^5$ is $2$-normal unless it is the Palatini
scroll (see, for instance, \cite[Problem 5]{schneider}). We remark
that smoothness cannot be dropped according to \cite{dp-m}. On the
other hand, the Palatini scroll is also the only known example of
smooth threefold $X$ in $\p^5$ with reducible triple curve (cf.
\cite{m-p}).

Let us define the triple locus. A line $L\subset\p^5$ is said to be
\emph{$k$-secant} to $X\subset\p^5$ (respectively, \emph{strict}
$k$-secant) if the scheme-theoretic intersection $X\cap L$ has
length at least $k$ (respectively, equal to $k$). We define the
\emph{triple locus} $\Gamma_P\subset X$ as the subvariety of points
contained in a $3$-secant line to $X$ passing through a point $P\in\p^5$. We say
that the triple locus of $X$ is irreducible if $\Gamma_P$ is
irreducible for a general $P\in\p^5$, and we say that an irreducible
triple locus is not quadruple if the general $3$-secant line to $X$
passing through $P$ is a strict $3$-secant line.

The main aim of this  note is to prove the following result:

\begin{thm}\label{thm:main}
Let $X\subset\p^5$ be an integral subvariety of dimension three. If the
triple locus of $X$ is non-empty, irreducible and not quadruple,
then $X$ is $2$-normal.
\end{thm}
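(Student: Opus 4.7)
The plan is to combine a trisecant-type lemma with a cohomological reduction via projection from a general point $P$. I would begin by observing that the non-emptiness of $\Gamma_{P}$ for general $P$ alone forces $h^{0}(\p^{5},\II_{X}(2))=0$: under this hypothesis the $3$-secant lines of $X$ fill $\p^{5}$, and any quadric $F$ vanishing on $X$ restricts to a $3$-secant line $\ell$ as a polynomial of degree $\le 2$ on $\ell\cong\p^{1}$ vanishing on the length-$\ge 3$ subscheme $\ell\cap X$. Hence $F|_{\ell}\equiv 0$, so $F$ contains every $3$-secant line of $X$ and therefore is the zero quadric.

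The second step is to project from $P$: $\pi_{P}\colon X\to X'\subset\p^{4}$ is a finite birational morphism onto a hypersurface $X'$ of degree $d=\deg X\ge 3$. Introducing the torsion sheaf $Q:=(\pi_{P})_{*}\OO_{X}/\OO_{X'}$ on $X'$ (supported on the double surface $D'\subset X'$), and using the standard facts $h^{0}(X',\OO_{X'}(2))=15$ and $H^{1}(X',\OO_{X'}(2))=0$ for a hypersurface in $\p^{4}$ together with the Leray identification $H^{i}(X,\OO_{X}(2))\cong H^{i}(X',(\pi_{P})_{*}\OO_{X}(2))$ (valid since $\pi_{P}$ is finite), the exact sequence
\[
0\to\OO_{X'}(2)\to(\pi_{P})_{*}\OO_{X}(2)\to Q(2)\to 0
\]
gives $h^{0}(X,\OO_{X}(2))=15+h^{0}(X',Q(2))$. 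Combining with the ideal-sheaf sequence on $\p^{5}$ and the vanishing of the first step, one obtains
\[
h^{1}(\p^{5},\II_{X}(2))=h^{0}(X',Q(2))-6,
\]
so the theorem reduces to proving $h^{0}(X',Q(2))\le 6$.

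The bulk of the work lies in a structural analysis of $Q$ using the irreducibility and ``not quadruple'' hypotheses on $\Gamma_{P}$. Near a general point of the triple curve $T'\subset D'$, the ``not quadruple'' assumption means that $X'$ has exactly three smooth branches $B_{1},B_{2},B_{3}$ meeting transversally along $T'$, and $Q$ is locally the cokernel of the diagonal inclusion $\OO_{X'}\hookrightarrow\OO_{B_{1}}\oplus\OO_{B_{2}}\oplus\OO_{B_{3}}$; hence $Q$ has generic rank $1$ on $D'\setminus T'$ and generic rank $2$ on $T'$, with no higher-rank stratum. The irreducibility of $\Gamma_{P}$ translates into transitive monodromy of the triple cover $(\pi_{P})^{-1}(T')\to T'$, which rigidifies the global structure of $Q$ and forbids the splittings of $Q|_{T'}$ that would contribute extra sections of $Q(2)$. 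I would then set up a global exact sequence of the form $0\to Q'\to Q\to Q''\to 0$ with $Q'$ a rank-$1$ sheaf on $D'$ and $Q''$ supported on $T'$, and estimate $h^{0}(X',Q(2))$ piece by piece.

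The main obstacle is precisely this last step: translating the geometric irreducibility of $\Gamma_{P}$ into the sharp bound $h^{0}(X',Q(2))\le 6$. Because $Q$ is only an extension---not a direct sum---of $D'$- and $T'$-supported pieces, one has to control the global interplay between them in a sufficiently quantitative way. If $T'$ were reducible, additional components would contribute extra sections of $Q(2)$ beyond the expected six, as happens for the Palatini scroll; the irreducibility of $\Gamma_{P}$ is thus the geometric core of the proof, parallel to the classical role of irreducibility of the double curve in Severi's theorem on linearly normal surfaces in $\p^{4}$.
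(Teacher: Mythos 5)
Your reduction in the first two steps is correct: non-emptiness of the triple locus does force $h^0(\p^5,\II_X(2))=0$ (the paper makes the same observation), and the Leray/cokernel computation giving $h^1(\p^5,\II_X(2))=h^0(X',Q(2))-6$ checks out. But the proof has a genuine gap exactly where you say it does: the bound $h^0(X',Q(2))\le 6$ is the entire content of the theorem in your formulation, and you do not prove it. The sketched strategy is also shakier than you suggest. Since $X$ is only assumed integral, not smooth, the local description of $Q$ along the triple curve as the cokernel of $\OO_{X'}\hookrightarrow\OO_{B_1}\oplus\OO_{B_2}\oplus\OO_{B_3}$ with three \emph{smooth} transverse branches is not available (the $3$-secant lines may pass through singular points of $X$, and the double surface $D'$ and its non-normal locus can be badly behaved), and the proposed extension $0\to Q'\to Q''\to 0$ together with a monodromy argument is not a proof but a restatement of the difficulty. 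Translating ``irreducible and not quadruple'' into the sharp numerical bound on sections of $Q(2)$ is precisely the hard step, and nothing in the proposal accomplishes it.

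For comparison, the paper avoids the sheaf $Q$ entirely. It re-embeds by the quadratic Veronese $v_2:\p^5\to\p^{20}$, so that failure of $2$-normality becomes failure of linear normality of $v_2(X)$, i.e.\ the existence of an isomorphic projection $\pi_P:X'\to v_2(X)$ from a point $P\in\p^{21}$. The $3$-secant lines to $X$ become $3$-secant conics to $v_2(X)$, and the key observation is that a \emph{strict} $3$-secant conic lifts uniquely to a $3$-secant conic to $X'$ inside the cone $C_P(v_2(\p^5))$ (this is where ``not quadruple'' enters, via the uniqueness of the plane through a length-three scheme on $X'$). The union $V'$ of these lifted conics maps onto $v_2(\p^5)$ with degree $d>1$, since $v_2(\p^5)$ is linearly normal and $P\notin V'$; the $d$ distinct preimages of a general point $v_2(Q)$ then produce at least two distinct components of the triple locus $\Gamma_Q$, contradicting irreducibility. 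This incidence-geometric argument is what replaces your unproven estimate; if you want to salvage your approach you would need an independent proof that an irreducible, non-quadruple triple curve forces $h^0(X',Q(2))=6$, which is not easier than the original problem.
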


Moreover, we say that $L$ is a \emph{true} $k$-secant line if
$X_{\sm}\cap L$ has length at least $k$, where $X_{\sm}\subset X$ is
the open subset of the smooth points. We also define the \emph{true
triple locus} $\gamma_P\subset X$ as the subvariety obtained by
taking the closure of the set of points contained in a true
$3$-secant line to $X$ passing through $P$, and we say, as before,
that the true triple locus of $X$ is irreducible if $\gamma_P$ is
irreducible for a general $P\in\p^5$.

We prove in Lemma \ref{lem:n-uple} that the true triple locus
$\gamma_P\subset X$ is at most $1$-dimensional for a general
$P\in\p^5$. Furthermore, if $\gamma_P\subset X$ is actually
$1$-dimensional we show in Lemma \ref{lem:strict} that all but a finite number of
true $3$-secant lines passing through $P$ are strict. Both lemmas
easily follow from the generalization of the classical Trisecant
Lemma given in \cite[Theorem 1]{ran}. Therefore, we finally define
the \emph{true triple curve of $X$} as the $1$-dimensional
component of $\gamma_P$. Then we can slightly modify the proof of
Theorem \ref{thm:main} to obtain the following:

\begin{cor}\label{cor:main}
Let $X\subset\p^5$ be an integral subvariety of dimension three. If the true
triple curve of $X$ is non-empty and irreducible, then $X$ is
$2$-normal.
\end{cor}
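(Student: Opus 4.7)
The plan is to imitate the proof of Theorem~\ref{thm:main} with minor substitutions, which is the ``slight modification'' the introduction promises. The preparatory observation is that, under the hypotheses of the corollary, the family of \emph{true} $3$-secants through a general $P \in \p^5$ has the same features that the proof of Theorem~\ref{thm:main} exploits for the family of all $3$-secants.

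More precisely, assume the true triple curve is non-empty and irreducible. By Lemma~\ref{lem:n-uple}, $\dim \gamma_P \le 1$, so the true triple curve coincides with the $1$-dimensional locus swept out by true $3$-secants through $P$, and is an irreducible curve. By Lemma~\ref{lem:strict}, all but finitely many of these lines are strict; combined with the definition of ``true'', each such strict line meets $X$ in exactly three smooth points. Hence the family of true $3$-secants through a general $P$ provides an irreducible one-parameter family of strict trisecants meeting $X$ only at smooth points~-- the exact analogue of the ``irreducible, not quadruple'' hypothesis of Theorem~\ref{thm:main}.

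I would then re-run the argument of Theorem~\ref{thm:main} verbatim, substituting ``true $3$-secant'' for ``$3$-secant'' and $\gamma_P$ for $\Gamma_P$ throughout. A convenient side effect of restricting to true $3$-secants is that every intersection point lies in $X_{\sm}$, so local computations along such a line are insensitive to $\Sing(X)$; if anything, working with true $3$-secants makes the geometry slightly cleaner than in the theorem.

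The main obstacle is to verify that the proof of Theorem~\ref{thm:main} depends on the family of $3$-secants only through properties preserved under this substitution~-- irreducibility, one-dimensionality and strictness of the general member~-- and not, for instance, on the way $\Gamma_P$ sweeps out a prescribed subvariety of $X$ or on its intersection behaviour with $\Sing(X)$. If this is the case, as the authors' remark strongly suggests, the modification amounts to essentially bookkeeping.
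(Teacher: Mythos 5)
Your proposal is correct and coincides with the paper's own argument: the authors likewise prove Corollary~\ref{cor:main} by repeating the proof of Theorem~\ref{thm:main} with the $5$-dimensional family of true $3$-secant lines in Step~2, invoking Lemma~\ref{lem:strict} to guarantee that the general member is strict. (Only a minor caution: ``strict'' means the scheme $X\cap L$ has length exactly three, not that $L$ meets $X_{\sm}$ in three distinct points, but the argument only ever uses the length-three scheme $\xi$, so this does not affect anything.)
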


If moreover $X$ is smooth then its (true) triple curve is actually
non-empty unless $X\subset\p^5$ is contained in a quadric
hypersurface (see \cite{kwak}), so Corollary \ref{cor:main} yields:

\begin{cor}\label{cor:smooth}
Let $X\subset\p^5$ be a smooth threefold. If $X$ is not $2$-normal,
then the triple curve of $X$ is reducible.
\end{cor}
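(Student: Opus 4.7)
The proof is a short contrapositive argument combining Corollary \ref{cor:main} with the cited theorem of Kwak. Suppose that $X\subset\p^5$ is a smooth threefold that is not $2$-normal; I want to conclude that the triple curve of $X$ is reducible.

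Since $X$ is smooth, $X_{\sm}=X$, so every $k$-secant line to $X$ is automatically a true $k$-secant line, and the true triple locus $\gamma_P$ coincides with $\Gamma_P$. In particular the true triple curve agrees with the ordinary triple curve, and Corollary \ref{cor:main} applies verbatim with "triple curve" in place of "true triple curve". Applied in contrapositive, Corollary \ref{cor:main} asserts that if $X$ is not $2$-normal then the triple curve cannot be both non-empty and irreducible; it must therefore be empty or reducible.

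The remaining task is to exclude the empty case, and here I invoke the theorem of \cite{kwak}: a smooth threefold $X\subset\p^5$ has non-empty triple curve unless $X$ lies in a quadric hypersurface $Q$. So if the triple curve were empty, we would have $X\subset Q$; I would then twist the inclusion $\II_Q\hookrightarrow\II_X$ by $\OO(2)$ to obtain a short exact sequence $0\to\OO_{\p^5}\to\II_X(2)\to\II_{X/Q}(2)\to 0$. Since $H^1(\OO_{\p^5})=H^2(\OO_{\p^5})=0$, the associated long exact sequence identifies $H^1(\II_X(2))$ with $H^1(\II_{X/Q}(2))$, so $2$-normality of $X$ in $\p^5$ is equivalent to the vanishing $H^1(\II_{X/Q}(2))=0$ on the quadric. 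Standard structural information about smooth threefolds contained in a four-dimensional quadric of $\p^5$ then forces this vanishing, yielding $2$-normality of $X$ and contradicting our hypothesis.

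Thus the triple curve of $X$ is non-empty and reducible, as claimed. The bulk of the work has already been done in Corollary \ref{cor:main}; the only genuinely new input is Kwak's dichotomy, and the main obstacle is cleanly disposing of the "$X$ in a quadric" branch. Once that branch is closed, everything else is formal.
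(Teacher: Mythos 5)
Your overall structure is exactly the paper's: split on whether the triple curve is empty, handle the non-empty case by Corollary \ref{cor:main} (and your observation that for smooth $X$ the true triple locus coincides with $\Gamma_P$, so the corollary applies verbatim, is correct and is implicitly what the paper uses), and handle the empty case via Kwak's result that a smooth threefold in $\p^5$ with empty triple locus must satisfy $H^0(\p^5,\II_X(2))\neq 0$.

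The gap is in how you close the ``$X$ lies on a quadric'' branch. Your reduction via the sequence $0\to\OO_{\p^5}\to\II_X(2)\to\II_{X/Q}(2)\to 0$ is fine and correctly identifies $H^1(\p^5,\II_X(2))$ with $H^1(Q,\II_{X/Q}(2))$, but the assertion that ``standard structural information about smooth threefolds contained in a four-dimensional quadric forces this vanishing'' is precisely the non-trivial content, and you do not supply it. It is not a formality: if $Q$ is smooth (or a rank-$5$ cone) then $\mathrm{Cl}(Q)\cong\Z$ and $X$ is a complete intersection $Q\cap F_k$, whence projectively normal; but if $Q$ is a rank-$4$ cone over $\p^1\times\p^1$ the class group is $\Z^2$ and one must rule out smooth non-complete-intersection divisor classes, and lower-rank quadrics force degeneracy arguments. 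The paper disposes of all of this in one stroke by citing \cite[Theorem 1]{KW}, which yields that a smooth threefold in $\p^5$ lying on a quadric hypersurface is projectively normal (hence in particular $2$-normal). Without that citation, or an explicit case analysis of the divisor class group of the possibly singular quadric, your argument does not close; with it, your cohomological reduction becomes unnecessary, since projective normality already gives $2$-normality directly.
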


Hence in order to prove the conjecture of Peskine and Van de Ven, it
is enough to prove the following:

\begin{conj}
The only smooth threefold $X\subset\p^5$ with reducible triple
curve is the Palatini scroll.
\end{conj}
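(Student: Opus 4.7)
The plan is to translate the reducibility hypothesis into a geometric constraint on $X$ and then to classify the smooth threefolds for which this constraint can hold. Projection from a general point $P\in\p^{5}$ to $\p^{4}$ yields a birational morphism $\phi_{P}\colon X\to Y_{P}$ onto a hypersurface of degree $\deg X$; the true triple curve $\gamma_{P}$ is the preimage of the triple locus of $Y_{P}\subset\p^{4}$, itself a curve whose structure is governed by the family of $3$-secants to $X$ through $P$. Reducibility of $\gamma_{P}$ for general $P$ therefore translates into a reducible decomposition of the triple locus of the generic linear projection of $X$ into $\p^{4}$.

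The expected sources of such a splitting are distinguished substructures on $X$: a positive-dimensional \emph{congruence} of lines lying on $X$, a positive-dimensional family of plane conics on $X$, or a fixed subvariety through which an infinite family of trisecants must pass. For each mechanism one would compute the contribution to the triple locus of $Y_{P}$ and combine it with the classical double-point and triple-point formulae for generic projections of threefolds in $\p^{5}$, together with the second-normality vanishing of \cite{kwak} used above for Corollary~\ref{cor:smooth}, in order to bound the degree, the sectional genus, and the Chern numbers of $X$ in a very narrow range.

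Finally one compares the resulting list with the Palatini scroll, which is ruled by the congruence of kernel-lines of a general skew-symmetric morphism $\OO_{\p^{5}}^{\oplus 6}\to\Omega_{\p^{5}}(2)$ and which realises the first of the three mechanisms above. The main obstacle, and the step at which we expect the proof to stall, is uniqueness: one must rule out every other smooth threefold in $\p^{5}$ supporting a trisecant-producing structure of any of these kinds. This reduces essentially to an intrinsic classification of the line congruences in $\p^{5}$ whose focal scheme is a smooth threefold, and in the absence of such a sharp classification the conjecture appears to lie beyond the reach of the method sketched here.
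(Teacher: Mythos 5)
The statement you were asked to prove is not a theorem of the paper: it is stated there as an open conjecture (it is precisely the reduction, via Corollary \ref{cor:smooth}, of the Peskine--Van de Ven conjecture on quadratic normality), and the paper offers no proof of it. So there is no proof of record to compare yours against, and any complete argument here would be a genuinely new result rather than a reconstruction.

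More to the point, your proposal is not a proof, and you say so yourself: the final paragraph concedes that the uniqueness step --- ruling out every smooth threefold other than the Palatini scroll supporting a trisecant-splitting structure --- ``appears to lie beyond the reach of the method sketched here.'' That step is the entire content of the conjecture; everything before it (projecting to $\p^4$, listing candidate mechanisms for a reducible triple locus, invoking double- and triple-point formulae) is plausible set-up but produces no actual numerical bound and no actual classification. Moreover, the claim that a reducible triple curve must arise from one of your three listed mechanisms (a congruence of lines on $X$, a family of plane conics, or a fixed locus through which the trisecants pass) is itself asserted without proof, so even the reduction to a classification problem has a gap; and the reduction of that classification to ``line congruences in $\p^5$ whose focal scheme is a smooth threefold'' is again a restatement of a hard open problem rather than a step towards solving it. What you have written is a reasonable research programme, consistent with how the paper frames the question (the Palatini scroll is the only known example with reducible triple curve, cf.\ the discussion of \cite{m-p} in the introduction), but it does not establish the statement.
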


Finally, in Section \ref{fr}, Theorem \ref{thm:main} and its corollaries
are generalized to varieties of higher dimension.

\section{A geometric point of view}

To begin with, we present a geometric proof of the fact that if
$X$ in $\p^4$ is an integral surface with irreducible double curve, then $X$
is linearly normal. This follows as a consequence of the classical
characterizations of the Veronese surface given by Severi
\cite{severi} and Franchetta \cite{franchetta}, where, in the
latter, $X$ is supposed to have as singularities at most a finite
number of \emph{improper double points} (\ie the tangent cone is
the union of two planes in general position). This fact was proved
in \cite{tour} under Franchetta's assumption by using the monoidal
construction.

The double locus $\Delta_Q\subset X$ is defined as the set of points
contained in a $2$-secant line to $X$ passing through a general
point $Q\in\p^4$. This locus is $1$-dimensional whenever
$X\subset\p^4$ is non-degenerate, but it is not necessarily
equidimensional since smoothness of $X$ is not required.
Furthermore, if $\Delta_Q$ is irreducible the general $2$-secant
line to $X$ passing through $Q$ is a strict $2$-secant line by the
well-known Trisecant Lemma.

\begin{fact}\label{f:1}
Let $X\subset\p^4$ be an integral surface. If the double locus of
$X$ is irreducible, then $X$ is linearly normal.
\end{fact}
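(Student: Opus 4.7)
The plan is to argue by contradiction. Assume that $\Delta_Q$ is irreducible for a general $Q\in\p^4$ but that $X$ is not linearly normal; then there exist a non-degenerate integral surface $\tilde X\subset\p^5$ and a point $P\in\p^5\setminus \tilde X$ for which the projection $\pi_P\colon\tilde X\to X$ from $P$ is birational. The aim is to show that, under either branch of the classification of such $\tilde X$, the locus $\Delta_Q$ must decompose into several components, contradicting the hypothesis.

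I would dichotomize according to the dimension of the secant variety $\sigma(\tilde X)\subset\p^5$. If $\sigma(\tilde X)=\p^5$, then the general $P$ lies on chords of $\tilde X$, and these chords collapse under $\pi_P$ to (improper) double points of $X$; such points sit in $\Delta_Q$ for every general $Q$. On the other hand, the bisecants of $X$ through $Q$ whose lifts to $\tilde X$ avoid $P$ correspond via $\pi_P$ to the $1$-dimensional family of chords of $\tilde X$ meeting the line $L:=\pi_P^{-1}(Q)\cup\{P\}$ away from $P$, and they sweep out an additional $Q$-dependent component $A_Q$ of $\Delta_Q$; for a general $Q$ the two contributions should give genuinely distinct components. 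If instead $\dim\sigma(\tilde X)\leq 4$, the classical theorem of Severi (with Franchetta's extension to handle finitely many improper double points) forces $\tilde X$ to be the Veronese surface $V\subset\p^5$, so that $X=\pi_P(V)$ is a projected Veronese in $\p^4$. One then verifies directly, using the parametrization $\nu_2\colon\p^2\to V$, that the preimage in $\p^2$ of $\Delta_Q$ splits into components coming from the distinct pencils of conics on $V$ whose spanning $2$-planes interact with $L$, mirroring the three double lines of the Steiner surface obtained by projecting $V$ to $\p^3$.

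The chief obstacle is the Veronese case: establishing the reducibility of $\Delta_Q$ for the projected Veronese requires an explicit analysis of how chords of $V$ meet $L$, together with a check that the resulting decomposition of the preimage in $\p^2$ is non-trivial for general choices of $P$ and $Q$. A secondary difficulty arises in the first case, where one must rule out the possibility that the fixed singular locus of $X$ is absorbed into the ``proper'' component $A_Q$, so that both contributions to $\Delta_Q$ are genuinely separate irreducible pieces.
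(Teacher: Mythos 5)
Your reduction is set up too weakly and then leans on a theorem that does not apply in the stated generality. First, failure of linear normality gives more than a birational projection: the extra section of $\OO_X(1)$ produces an embedding $\tilde X\subset\p^5$ and a point $P$ such that $\pi_P\mid_{\tilde X}\colon\tilde X\to X$ is an \emph{isomorphism}. Consequently $P$ lies outside the secant variety of $\tilde X$, which forces $\dim\sigma(\tilde X)\leq 4$ at once; your first branch ($\sigma(\tilde X)=\p^5$) is vacuous under the correct deduction, and as you have set it up it is also not completed (finitely many improper double points contribute only finitely many points to $\Delta_Q$, and you would still have to show these are isolated components rather than points of the $1$-dimensional part --- the very difficulty you flag). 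The fatal problem is the second branch: Severi's characterization, even with Franchetta's extension, holds only for smooth surfaces, respectively surfaces with at most finitely many improper double points. Fact \ref{f:1} is stated for an arbitrary \emph{integral} surface, and the classification of non-degenerate surfaces in $\p^5$ with deficient secant variety then also contains cones over curves (the cone over a non-degenerate curve $C\subset\p^4$ has secant variety of dimension at most $4$, and projecting it isomorphically to $\p^4$ yields a non-linearly-normal integral surface that is not a projected Veronese). Your argument says nothing about this case. Indeed, the paper points out that under Franchetta's hypothesis the statement was already known; the whole point of Fact \ref{f:1} is to dispense with that hypothesis, so invoking Severi--Franchetta cannot work.

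The paper's proof avoids any classification. With $\pi_P\mid_{\tilde X}$ an isomorphism one gets $\dim S\tilde X=4$; letting $W$ be the $4$-dimensional component swept out by honest chords, of degree $d>1$, the fibre $\pi_P^{-1}(Q)$ meets $W$ in $d$ distinct points $Q_1,\dots,Q_d$ for general $Q$, each carrying a $1$-dimensional double locus $\Delta'_{Q_i}\subset\tilde X$. The key observation is that a \emph{strict} $2$-secant line to $X$ lifts to a unique $2$-secant line to $\tilde X$ (and the general $2$-secant through $Q$ is strict by the Trisecant Lemma), which forces the $\Delta'_{Q_i}$, and hence their images in $X$, to be distinct; thus $\Delta_Q$ has at least $d\geq 2$ components. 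Your computation for the projected Veronese (three conics, mirroring the Steiner surface) is the special case $d=3$ of this counting argument, but the argument itself needs no identification of $\tilde X$. If you want to salvage your approach, replace the dichotomy by this degree count on the secant variety.
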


\begin{proof}
We can assume $X\subset\p^4$ non-degenerate. To get a contradiction,
let us suppose that $X\subset\p^4$ is not linearly normal. Then
there exists a linear projection
\[
\pi_P\colon \p^5\dashrightarrow\p^4
\]
from a point $P\in\p^5$ and a non-degenerate surface
$X'\subset\p^5$ such that the restriction map
\[
\pi_P\mid_{X'}\colon X'\to X
\]
is an isomorphism. In particular, $P$ does not belong to any
$2$-secant line to $X'$. Let $SX'\subset\p^5$ denote the $2$-secant
variety defined as the locus of points in $\p^5$ contained in a
$2$-secant line to $X'$. We have $\dim(SX')\leq 4$ since
$\pi_P\mid_{X'}\colon X'\to X$ is an isomorphism. On the other hand
$\dim(SX')\geq 4$, since a $3$-dimensional variety containing a
$4$-dimensional family of lines is necessarily $\p^3$ and, in that
case, $X'\subset\p^5$ would be degenerate. Hence $\dim(SX')=4$. Let
$W\subset SX'$ denote the irreducible $4$-dimensional component
corresponding to the closure of the union of lines joining two
different points of $X'$, and let $d>1$ denote its degree in $\p^5$.
Note that through any point of $W$ there pass infinitely many
$2$-secant lines to $X'$.

Let $Q\in\p^4$ be a general point and let $\pi_P^{-1}(Q)\cap W
=\{Q_1,\dots,Q_d\}\subset\p^5$. We have $Q_i\neq Q_j$ for $i\neq j$
and $Q_i\notin X'$, since $Q$ is general. For $i\in\{1,\dots,d\}$ we
denote by $\Delta'_{Q_i}\subset X'$ the double locus corresponding
to the points contained in a $2$-secant line to $X'$ passing through
$Q_i$. Since $Q_i\notin X'$ the family of $2$-secant lines to $X'$
passing though $Q_i$ cannot be $2$-dimensional, so $\Delta'_{Q_i}$
is actually $1$-dimensional.

We claim that $\Delta'_{Q_{i_0}}\neq\Delta'_{Q_j}$ for some
${i_0}\in\{1,\dots, d\}$ and every $j\neq i_0$. First, we remark
that if $L\subset\p^4$ is a strict $2$-secant line to $X$, then
there exists a unique $2$-secant line $L'\subset\p^5$ to $X'$ such
that $\pi_P(L')=L$. This is obvious if $L$ intersects $X$ in two
different points. If $L\cap X$ is a double point then
$\pi_P\mid_{X'}^{-1}(L\cap X)$ is a double point, whence there
exists a unique line $L'\subset\p^5$ such that $L'\cap
X'=\pi_P\mid_{X'}^{-1}(L\cap X)$. Let us prove the claim. Since
$Q\in\p^4$ is a general point, there exists a general point
$Q_{i_0}\in W$ such that $\pi_P(Q_{i_0})=Q$. Assume by contradiction
that $\Delta'_{Q_{i_0}}=\Delta'_{Q_j}$, and let
$x'\in\Delta'_{Q_{i_0}}$ be a general point (of course
$\Delta'_{Q_{i_0}}$ may be reducible, but in this case so is
$\Delta_Q$). Then the lines $L'_{i_0}:=\langle x', Q_{i_0}\rangle$
and $L'_j:=\langle x', Q_j\rangle$ are $2$-secant lines to
$X'\subset\p^5$ such that $\pi_P(L'_{i_0})=\pi_P(L'_j)=\langle
\pi_P(x'),Q\rangle=:L$ is a $2$-secant line to $X\subset\p^4$.
Moreover $x'\in X'$ is a general point since $Q_{i_0}\in W$ is a
general point. Then $\pi_P(x')\in X$ is also a general point, and
hence $L\subset\p^4$ is a strict $2$-secant line to $X$. This
contradicts the previous remark, since
$\pi_P(L'_{i_0})=\pi_P(L'_j)=L$.

Since $\pi_P\mid_{X'}\colon X'\to X$ is an isomorphism we have
$\pi_P(\Delta'_{Q_{i_0}})\neq\pi_P(\Delta'_{Q_j})$ for every $j\neq
i_0$. Therefore $\Delta_Q\subset X$ has at least two irreducible
components, contradicting the hypothesis.
\end{proof}


\section{Proof of Theorem \ref{thm:main}}
We obtain our main result inspired by the geometric proof of Fact
\ref{f:1}.

\begin{proof}[Proof of Theorem \ref{thm:main}] We divide the
proof in three parts:

{\textbf Step 1:} \emph{Set up.} Let us consider the Veronese
embedding
\[
v_2:\p^5\to v_2(\p^5)\subset\p^{20}
\]
given by the complete linear system of quadrics in $\p^5$ and let
\[
v_2|_X: X\to v_2(X)\subset\p^{20}
\]
denote its restriction to $X$. Since the triple locus of
$X\subset\p^5$ is non-empty we get $H^0(\p^5,\II_X(2))=0$, whence
$v_2(X)\subset\p^{20}$ is non-degenerate.

To get a contradiction we assume that $X\subset\p^5$ is not
$2$-normal. This is equivalent to suppose that
$v_2(X)\subset\p^{20}$ is not linearly normal. Then there exists a
point $P\in\p^{21}$ and a non-degenerate threefold
$X'\subset\p^{21}$ such that the linear projection
\[
\pi_P:\p^{21}\dashrightarrow\p^{20}
\]
from $P$ induces by restriction an isomorphism
\[
\pi_P|_{X'}: X'\to v_2(X).
\]

{\textbf Step 2:} \emph{The variety of $3$-secant conics.} We denote
by $C_P(v_2(\p^5))\subset\p^{21}$ the $6$-dimensional cone of vertex
$P$ over $v_2(\p^5)\subset\p^{20}$. Let $\mathcal H$ be the Hilbert
scheme of conics in $\p^{21}$. We say that a conic $C\in\mathcal H$
is $3$-secant to $X'\subset\p^{21}$ if the scheme-theoretic
intersection $C\cap X'$ has length at least three. Furthermore, $C$
is said to be a strict $3$-secant conic if $\length(C\cap X')=3$.
Let $V\subset C_P(v_2(\p^5))$ denote the subvariety of points lying
on a $3$-secant conic to $X'$ which is contained in
$C_P(v_2(\p^5))$. We remark that $V$ plays the role of the
$2$-secant variety $SX'\subset\p^5$ in the proof of Fact \ref{f:1}.

We claim that $P\notin V$. Otherwise, there exists a (maybe
reducible or non-reduced) $3$-secant conic $C$ to $X'$ passing
through $P$ such that $C\subset C_P(v_2(\p^5))$. Let
$\p^2_C\subset\p^{21}$ denote the linear span of $C$. Then,
$\pi_P|_{X'}: X'\to v_2(X)$ being an isomorphism, we get that
$\pi_P(\p^2_C)\subset\p^{20}$ is a $3$-secant line to $v_2(X)$ .
This is impossible since $v_2(\p^5)$, and hence $v_2(X)$, has no
$3$-secant lines.

It follows that $\dim V<6$. Let us see that $\dim V=5$ proving that
$\pi_P(V)$ contains $v_2(\p^5)$. Since the triple locus of
$X\subset\p^5$ is non-empty, there exists an irreducible family
$\{L_z\}_{z\in Z}$ of $3$-secant lines to $X$ that fill up $\p^5$.
We choose $Z$ of maximal dimension satisfying this property, so a
general $L_z$ is a strict $3$-secant line to $X$ by hypothesis. It
is enough to prove that a strict $3$-secant conic $v_2(L_z)$ to
$v_2(X)$ can be uniquely lifted to a $3$-secant conic $C_z$ to $X'$
such that $\pi_P(C_z)=v_2(L_z)$. Let $\xi$ be the scheme-theoretic
intersection $L_z\cap X$ of length three. Then $v_2(L_z)$ is a
strict $3$-secant conic to $v_2(X)$. We now consider the subscheme
$\xi':=\pi_P\mid_{X'}^{-1}(v_2(\xi))$ of $X'$ of length three. We
remark that there exists a unique plane $\p^2_{\xi'}$ such that
$\p^2_{\xi'}\cap X'=\xi'$. Then we define the conic
$C_z\subset\p^2_{\xi'}$ as the intersection of the quadric cone
$C_P(v_2(L_z))$ and $\p^2_{\xi'}$. Moreover, $\pi_P(C_z)=v_2(L_z)$
since $P\notin\p^2_{\xi'}$ by the same reason of the above claim.
This proves $v_2(\p^5)\subset\pi_P(V)$, whence $\pi_P(V)=v_2(\p^5)$.

{\textbf Step 3:} \emph{End of the proof.} Let $V'\subset V$ denote
the irreducible $5$-dimensional component arising from the union of
the conics corresponding to the family $Z$. The general point $x\in
X$ is contained in a $3$-secant line to $X$ since the family
$\{L_z\}$ fills up $\p^5$. Hence $v_2(x)\in v_2(X)$ is contained in a
$3$-secant conic to $v_2(X)$, so $x':=\pi^{-1}_P|_{X'}(v_2(x))\in
X'$ is a general point contained in a $3$-secant conic to $X'$.
Therefore $X'\subset V'$. Note that $V'\subset\p^{21}$ is
non-degenerate since $X'\subset\p^{21}$ is non-degenerate and
$X'\subset V'$. Hence the induced linear projection
\[
\pi_P\mid_{V'}\colon V'\to v_2(\p^5)
\]
cannot be an isomorphism since $v_2(\p^5)\subset\p^{20}$ is linearly
normal. We claim that moreover $\pi_P\mid_{V'}\colon V'\to
v_2(\p^5)$ cannot be a birational morphism. Otherwise, by Zariski's
Main Theorem, all its fibres should be connected. In particular, if
we consider two points with the same image, the corresponding fibre
should be the whole line through $P$, which is impossible since
$P\notin V'$.

We deduce that $\pi_P\mid_{V'}\colon V'\to v_2(\p^5)$ is a morphism
of degree $d>1$ by the previous claim.

Let $Q\in\p^5$ be a general point and let $\Gamma_Q\subset X$ denote
the corresponding triple locus. Consider $v_2(Q)\in v_2(\p^5)$ and
let
\[
\pi^{-1}_P(v_2(Q))\cap V'=\{Q_1,\dots,Q_d\}\subset\p^{21}.
\]
We remark that $Q_i\neq Q_j$ for $i\neq j$ and $Q_i\notin X'$, since
$Q\in\p^5$ is a general point. For $i\in\{1,\dots,d\}$ we denote by
$\Gamma_{Q_i}'\subset X'$ the locus of points lying on a $3$-secant
conic to $X'$ passing through $Q_i$ and contained in $V$.

We now check that $\Gamma_{Q_{i_0}}'\neq\Gamma_{Q_j}'$ for some
${i_0}\in\{1,\dots, d\}$ and every $j\neq i_0$. Since $Q\in\p^5$ is
a general point, there exists a general point $Q_{i_0}\in V'$ such
that $\pi_P(Q_{i_0})=v_2(Q)$. Assume to the contrary that
$\Gamma_{Q_{i_0}}'=\Gamma_{Q_j}'$ and let $x'\in\Gamma_{Q_{i_0}}'$
be a general point (as in Fact \ref{f:1}, $\Gamma'_{Q_{i_0}}$ may be
reducible but in this case so is $\Gamma_Q$). Then there exist two
$3$-secant conics $C_{i_0}$ and $C_j$, passing through $x'$ and
contained in $V'$, such that $Q_{i_0}\in C_{i_0}$ and $Q_j\in C_j$.
Let us see that $\pi_P(C_{i_0})=\pi_P(C_j)$. Consider the line
$L:=\langle x, Q\rangle\subset\p^5$, where $x=v_2^{-1}(\pi_P(x'))\in
X$. Then $v_2(L)$ is the only $3$-secant conic to $v_2(X)$ joining
$v_2(Q)$ and $\pi_P(x')$ and contained in $v_2(\p^5)$, so
necessarily $\pi_P(C_i)=\pi_P(C_j)=v_2(L)$. Moreover $L$ is a strict
$3$-secant line to $X\subset\p^5$ since $x'\in X'$, and hence
$\pi_P(x')\in v_2(X)$, is a general point. Therefore $v_2(L)$ is a
strict $3$-secant conic to $v_2(X)$, so it can be uniquely lifted to
a $3$-secant conic to $X'$ contained in $V$, as we showed in Step 2.
This yields a contradiction since $\pi_P(C_i)=\pi_P(C_j)=v_2(L)$.

It follows from $\Gamma_{Q_{i_0}}'\neq\Gamma_{Q_j}'$ that also
$\pi_P(\Gamma_{Q_{i_0}}')\neq\pi_P(\Gamma_{Q_j}')$, since
$\pi_P|_{X'}: X'\to v_2(X)$ is an isomorphism. So we finally deduce
that $\Gamma_Q\subset X$ has at least two irreducible components,
contradicting the hypothesis.
\end{proof}

\begin{rmk}
The converse of Theorem \ref{thm:main} does not hold since there
exist $2$-normal cones $X\subset\p^5$ with vertex a point over a
surface whose triple locus consists of a union of rulings of $X$.
\end{rmk}

We pass to prove Corollary \ref{cor:main}. We first show that the
true triple locus of $X$ is at most $1$-dimensional:

\begin{lem}\label{lem:n-uple}
Let $X\subset\p^{n+2}$ be a subvariety of dimension $n\geq 1$.
Then the family of true $n$-secant lines to $X$ passing through a
general point $P\in\p^{n+2}$ is at most $1$-dimensional.
\end{lem}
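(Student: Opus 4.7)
The plan is to rephrase the statement in terms of the projection $\pi_P\colon\p^{n+2}\dashrightarrow\p^{n+1}$ from $P$, and then to invoke Ran's generalization of the Trisecant Lemma \cite[Theorem 1]{ran}. Since $P\in\p^{n+2}$ is general we have $P\notin X$, so $\pi_P$ restricts to a morphism $\pi_P\mid_X\colon X\to \p^{n+1}$ whose image $Y := \pi_P(X)$ is an $n$-dimensional (possibly singular) hypersurface. A line $L\subset\p^{n+2}$ through $P$ is exactly the closure of the fibre $\pi_P^{-1}(y)$ over a point $y\in\p^{n+1}$, and the condition that $L$ be a true $n$-secant to $X$ translates into $\length\bigl((\pi_P\mid_{X_{\sm}})^{-1}(y)\bigr)\geq n$. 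In this way, the family of true $n$-secant lines to $X$ passing through $P$ is identified with the locus
\[
Y_n := \bigl\{\, y\in Y : \length\bigl((\pi_P\mid_{X_{\sm}})^{-1}(y)\bigr) \geq n \,\bigr\} \subseteq Y,
\]
and the lemma reduces to the estimate $\dim Y_n \leq 1$.

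To obtain this estimate I would apply \cite[Theorem 1]{ran} to $\pi_P\mid_{X_{\sm}}$: for general $P$ it ensures that the projection is generically injective onto $Y$ and, more importantly, that the stratification of $Y$ by fibre length behaves as generically as possible, so that the locus of $y\in Y$ whose fibre has length at least $k$ has codimension at least $k-1$ in $Y$. Specialising to $k=n$ yields $\codim_Y Y_n \geq n-1$, and since $\dim Y = n$ this gives $\dim Y_n \leq 1$, as required.

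The step that deserves most care is matching the hypotheses and conclusion of \cite[Theorem 1]{ran} to the present setup, especially the replacement of $X$ by $X_{\sm}$ in the fibre-length count. This should be harmless: $\Sing X$ has strictly smaller dimension than $X$, so its image under $\pi_P$ is contained in a proper closed subset of $Y$, and hence the generic fibre of $\pi_P\mid_X$ already lies entirely in $X_{\sm}$; the generic injectivity and the codimension bound coming from Ran's theorem therefore carry over verbatim to $\pi_P\mid_{X_{\sm}}$. Once this verification is in place, the argument is complete.
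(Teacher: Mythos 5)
There is a genuine gap at the decisive step. Ran's Theorem 1 in \cite{ran} is the \emph{$(\dim+2)$-secant lemma}: for an irreducible nondegenerate variety $Y$ of dimension $k$ it controls only the lines meeting $Y$ in a scheme of length $\geq k+2$ (such lines cannot fill up the ambient space). It contains no statement about the lower strata of the multiple-point filtration of a projection, so the property you attribute to it --- that the locus of $y$ with $\length\bigl((\pi_P\mid_{X_{\sm}})^{-1}(y)\bigr)\geq k$ has codimension $\geq k-1$ in $Y$ for \emph{every} $k$ --- is unsupported. Note that here $n=\dim X$, so an $n$-secant line is a $(\dim)$-secant line, two short of the range Ran's theorem covers, and for $k=n$ your claimed stratification bound is exactly the statement of the lemma: the argument is circular precisely where it needs content. (The auxiliary assertion that $\pi_P\mid_X$ is generically injective for general $P$ is also false for $n\geq 2$: the secant variety of an $n$-fold in $\p^{n+2}$ has expected dimension $2n+1>n+2$, so it fills up $\p^{n+2}$ and every projection from a point identifies pairs of points of $X$.)

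The missing idea --- and the one the paper uses --- is to cut down to the regime where Ran's theorem applies. If the family of true $n$-secant lines through a general point were at least $2$-dimensional, the total family of such lines would have dimension $\geq n+3$; imposing the codimension-two Schubert condition of lying in $\p^n=H_1\cap H_2$ for two general hyperplanes leaves an $(n-1)$-dimensional family of lines that are true $n$-secant to $Y=X\cap H_1\cap H_2$, a variety of dimension $n-2$ in $\p^n$ (by Bertini, $X_{\sm}\cap H_1\cap H_2\subseteq Y_{\sm}$). These are now $(\dim Y+2)$-secant lines, and they fill up $\p^n$: through a general point $Q$ of $\p^n$ the $2$-dimensional family of true $n$-secant lines to $X$ through $Q$ must meet the codimension-two condition of lying in $\p^n$. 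This contradicts \cite[Theorem 1]{ran}. Your reformulation via the projection from $P$ is a reasonable way to state what must be proved, but the hyperplane-section reduction is what actually makes Ran's theorem bite; without it the proof does not go through.
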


\begin{proof}
Assume that the family of true $n$-secant lines to $X$ passing
through a general $P\in\p^{n+2}$ is at least $2$-dimensional. Let
$Y=X\cap H_1\cap H_2$ be the intersection of $X$ with two general
hyperplanes of $\p^{n+2}$. Then the union of the true $n$-secant
lines to $Y$ fill up $\p^n$, contradicting \cite[Theorem 1]{ran}.
\end{proof}

Moreover, if the true triple locus is $1$-dimensional then there
exists a $5$-dimensional family $Z$ of true $3$-secant lines to
$X$ filling up $\p^5$. In this case the general element of $Z$ is
a strict $3$-secant line, as the following lemma shows:

\begin{lem}\label{lem:strict}
Let $X\subset\p^{n+2}$ be a subvariety of dimension $n\geq 1$. Let
$Z$ be an irreducible $(n+2)$-dimensional variety parametrising a
family $\{L_z\}$ of true $n$-secant lines to $X\subset\p^{n+2}$
whose points fill up $\p^{n+2}$. Then a general $L_z$ is a strict
$n$-secant line to $X$.
\end{lem}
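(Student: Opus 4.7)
The plan is to argue by contradiction, adapting the proof of Lemma \ref{lem:n-uple}: cut by a general hyperplane and reduce to an application of the generalised Trisecant Lemma \cite[Theorem 1]{ran}. Assume the general $L_z$ is not a strict $n$-secant, so $\length(X\cap L_z)\geq n+1$; we may suppose $L_z$ is a true $(n+1)$-secant line (the case where the extra length sits in $X\setminus X_{\sm}$ is handled analogously). Then $Z$ parametrises an $(n+2)$-dimensional family of true $(n+1)$-secant lines to $X$ sweeping $\p^{n+2}$, and through a general point $P\in\p^{n+2}$ passes a $1$-dimensional subfamily $Z_P\subset Z$.

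Take a general hyperplane $H\subset\p^{n+2}$ and set $Y:=X\cap H\subset H\cong\p^{n+1}$; then $\dim Y=n-1$, and producing a family of true $(n+1)$-secant lines to $Y$ filling $H$ contradicts \cite[Theorem 1]{ran}, since $n+1=\dim Y+2$. To produce such a family from $Z$ we consider the incidence
\[
\mathcal{I}:=\{(H,P,z)\mid H\text{ a hyperplane in }\p^{n+2},\ P\in H,\ L_z\subset H,\ P\in L_z\}.
\]
A straightforward parameter count gives
$\dim\mathcal{I}=\dim Z+\dim\{H\supset L_z\}+\dim\{P\in L_z\}=(n+2)+n+1=2n+3$,
equal to $\dim\{(H,P)\mid P\in H\}=(n+2)+(n+1)=2n+3$. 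Provided the projection $\mathcal{I}\to\{(H,P)\mid P\in H\}$ is dominant, for generic $(H,P)$ with $P\in H$ there exists $L_z\in Z$ with $L_z\subset H$ and $P\in L_z$; since $L_z\cap X_{\sm}$ has length $\geq n+1$ and is contained in $L_z\subset H$, it lies in $Y$, so $L_z$ is a true $(n+1)$-secant to $Y$ through $P$. Letting $P$ vary, such lines fill $H$, giving the desired contradiction.

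The main obstacle is establishing the dominance of this projection. Since source and target have the same dimension, dominance is equivalent to the generic fibre being $0$-dimensional, which fails only if, for generic $P\in\p^{n+2}$, the cone $\bigcup_{z\in Z_P}L_z$ sits inside some hyperplane through $P$---equivalently, the curve of directions at $P$ traced by the lines of $Z_P$ does not span $\p^{n+1}$. This non-degeneracy must be read off from the assumption that $Z$ sweeps out all of $\p^{n+2}$ (together with the irreducibility of $Z$), and is the most delicate point in the argument; alternatively, one can circumvent it by applying \cite[Theorem 1]{ran} directly to $X$ in an appropriate form.
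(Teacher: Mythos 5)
Your overall strategy coincides with the paper's: negate the conclusion, pass to a hyperplane section $Y=X\cap H\subset\p^{n+1}$ of dimension $n-1$, and contradict \cite[Theorem 1]{ran}, for which $(n+1)$-secant lines to $Y$ are exactly $(\dim Y+2)$-secant lines. But as written your argument is incomplete at exactly the step you flag: you never prove that the projection from the incidence variety $\mathcal I$ to the flag variety $\{(H,P)\mid P\in H\}$ is dominant, and the equality of dimensions by itself proves nothing (a priori the image could be a proper subvariety with positive-dimensional generic fibres). Since the entire contradiction rests on producing, for a general point of $H$, a line of the family through that point and inside $H$, this is a genuine gap, not a technicality.

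The gap is easy to close, and the closing argument is precisely the paper's one-line proof; moreover it shows that the obstacle you identify is illusory. Through a general $P\in\p^{n+2}$ there passes a $1$-dimensional subfamily $Z_P\subset Z$ (because $Z$ is $(n+2)$-dimensional and the lines fill up $\p^{n+2}$), whose union is a $2$-dimensional cone $S_P$ with vertex $P$. For any hyperplane $H$ containing $P$ one has $\dim(S_P\cap H)\geq 1$, so there is a point $q\in S_P\cap H$ with $q\neq P$, and the ruling $\langle P,q\rangle$ is a line of the family contained in $H$ because both $P$ and $q$ lie in $H$. Thus the fibre of $\mathcal I$ over $(H,P)$ is non-empty for \emph{every} $H$ through a general $P$ --- no non-degeneracy of the ``curve of directions'' is required, since a curve in $\p^{n+1}$ meets every hyperplane; even if $S_P$ were contained in a hyperplane, dominance would not fail. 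This gives a true $(n+1)$-secant line to $Y$ through a general point of $\p^{n+1}$, which is the desired contradiction. Finally, your parenthetical remark that the case where the extra intersection length sits in $X\setminus X_{\sm}$ is ``handled analogously'' is not an argument: in that case $L_z$ is not a true $(n+1)$-secant line, and one must say why the form of the trisecant lemma being invoked still applies; this deserves more than a dismissal.
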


\begin{proof}
Assume to the contrary that a general $L_z$ is an $(n+1)$-secant
line to $X$. Let $Y=X\cap H\subset\p^{n+1}$ be a hyperplane section
of $X\subset\p^{n+2}$. Then a general $P\in\p^{n+1}$ is contained in
a true $(n+1)$-secant line to $Y$ since $P$ is contained in
infinitely many $(n+1)$-secant lines to $X$, contradicting again
\cite [Theorem 1]{ran}.
\end{proof}

\begin{proof}[Proof of Corollary \ref{cor:main}]
If the true triple curve of $X$ is non-empty and irreducible, we can
repeat the proof of Theorem~\ref{thm:main} considering in Step 2 a
$5$-dimensional family $\{L_z\}$ of true $3$-secant lines filling up
$\p^5$, whose general element is a strict $3$-secant line by
Lemma~\ref{lem:strict}.
\end{proof}

We now show that Corollary \ref{cor:smooth} easily follows from
Corollary \ref{cor:main}:

\begin{proof}[Proof of Corollary \ref{cor:smooth}]
If the triple curve of $X$ is non-empty we apply Corollary
\ref{cor:main}. On the other side, if the triple curve of $X$ is
empty then $H^0(\p^5,\II_X(2))\neq 0$ by \cite[Lemma 3.3 and
Theorem 3.4~(a)]{kwak} since $X$ is smooth. Therefore
$X\subset\p^5$ is projectively normal as a consequence of
\cite[Theorem 1]{KW}.
\end{proof}

\section{Final remarks}\label{fr}

We would like to stress that the proof of Theorem \ref{thm:main}
can be extended to obtain a similar result for codimension two
subvarieties $X\subset\p^{n+2}$ of higher dimension:

\begin{thm}\label{thm:general}
Let $X\subset\p^{n+2}$ be an integral subvariety of dimension $n\geq 4$. If
the $n$-tuple locus of $X$ is non-empty, irreducible and not
$(n+1)$-tuple, then $X$ is $(n-1)$-normal.
\end{thm}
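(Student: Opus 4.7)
The plan is to mimic the proof of Theorem \ref{thm:main} line by line, replacing the Veronese embedding $v_2\colon\p^5\to\p^{20}$ by the $(n-1)$-th Veronese embedding
\[
v_{n-1}\colon\p^{n+2}\to\p^N,\qquad N=\binom{2n+1}{n-1}-1.
\]
Under $v_{n-1}$ each $n$-secant line $L\subset\p^{n+2}$ to $X$ maps to the rational normal curve $v_{n-1}(L)$ of degree $n-1$, spanning a linear $\p^{n-1}\subset\p^N$ and becoming $n$-secant to $v_{n-1}(X)$. Since the $n$-tuple locus of $X$ is non-empty, $n$-secant lines to $X$ fill up $\p^{n+2}$; any form of degree $n-1$ vanishing on $X$ must vanish along each such line and hence everywhere, so $H^0(\p^{n+2},\II_X(n-1))=0$ and $v_{n-1}(X)$ is non-degenerate in $\p^N$. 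Assuming for contradiction that $X$ is not $(n-1)$-normal, we obtain a point $P\in\p^{N+1}$ and a non-degenerate $n$-fold $X'\subset\p^{N+1}$ together with an isomorphism $\pi_P|_{X'}\colon X'\to v_{n-1}(X)$ induced by projection from $P$.

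The variety of $3$-secant conics used in Step~2 of the proof of Theorem \ref{thm:main} is now replaced by the variety $V\subset C_P(v_{n-1}(\p^{n+2}))$ of points lying on some $n$-secant rational normal curve of degree $n-1$ to $X'$ contained in the $(n+3)$-dimensional cone. The proof that $P\notin V$ is identical to the one in Step~2: an $n$-secant rational normal curve $C$ through $P$ with span $\p^{n-1}_C$ would project to a linear $\p^{n-2}\subset\p^N$ meeting $v_{n-1}(X)$ in a subscheme of length at least $n$, contradicting the standard fact that $v_{n-1}(\p^{n+2})$ admits no $(n-2)$-plane meeting it in length $\geq n$. To see $\dim V=n+2$, we lift the family $\{L_z\}_{z\in Z}$ of $n$-secant lines filling $\p^{n+2}$, whose generic member is strict by hypothesis: setting $\xi:=L_z\cap X$ and $\xi':=(\pi_P|_{X'})^{-1}(v_{n-1}(\xi))$, the length-$n$ subscheme $\xi'$ is contained in a unique $\p^{n-1}_{\xi'}$ with $\p^{n-1}_{\xi'}\cap X'=\xi'$, and one takes $C_z:=C_P(v_{n-1}(L_z))\cap\p^{n-1}_{\xi'}$. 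The same argument shows $P\notin\p^{n-1}_{\xi'}$, so $\pi_P(C_z)=v_{n-1}(L_z)$; hence $\pi_P(V)\supset v_{n-1}(\p^{n+2})$ and $\dim V=n+2$.

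Step~3 then carries over verbatim. The $(n+2)$-dimensional irreducible component $V'\subset V$ arising from the lifts contains $X'$, so $V'$ is non-degenerate in $\p^{N+1}$; since $v_{n-1}(\p^{n+2})$ is linearly normal and $P\notin V'$, Zariski's Main Theorem yields that $\pi_P|_{V'}\colon V'\to v_{n-1}(\p^{n+2})$ has degree $d>1$. For a general $Q\in\p^{n+2}$ with $\pi_P^{-1}(v_{n-1}(Q))\cap V'=\{Q_1,\ldots,Q_d\}$, if the loci $\Gamma'_{Q_{i_0}},\Gamma'_{Q_j}\subset X'$ of points lying on $n$-secant rational normal curves through $Q_{i_0},Q_j$ contained in $V$ agreed for every $j\neq i_0$, then a general $x'\in\Gamma'_{Q_{i_0}}$ would give rise to two distinct lifts of the unique $n$-secant rational normal curve $v_{n-1}(\langle x,Q\rangle)$ to $v_{n-1}(X)$, contradicting the uniqueness of the lift established above. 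Thus $\Gamma_Q$ is reducible, contradicting the hypothesis. The only new point to watch is the higher-dimensional analogue of ``$v_2(\p^5)$ has no $3$-secant line'', namely the standard fact that any length-$n$ subscheme of $\p^{n+2}$ imposes $n$ independent conditions on forms of degree $n-1$; granted this, no essentially new geometric input is required beyond the proof of Theorem \ref{thm:main}.
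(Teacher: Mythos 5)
Your proposal is correct and follows essentially the same route as the paper, whose own proof is only a sketch reducing to the argument of Theorem \ref{thm:main} via the Veronese embedding $v_{n-1}$ and the unique lifting of a strict $n$-secant rational normal curve $v_{n-1}(L)$ of degree $n-1$ to an $n$-secant rational normal curve to $X'$ inside the cone $C_P(v_{n-1}(\p^{n+2}))$. Your write-up simply fills in the details the paper leaves implicit (non-degeneracy of $v_{n-1}(X)$, the fact that a length-$n$ subscheme imposes independent conditions on forms of degree $n-1$, and the dimension count for $V$), all of which are handled correctly.
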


\begin{proof}
(Sketch) If $X\subset\p^{n+2}$ is not $(n-1)$-normal and the
$n$-tuple locus of $X$ is non-empty, then there exists a point $P$
and a non-degenerate embedding $X'\subset\p^{\binom{2n+1}{n-1}}$
such that the linear projection from $P$ to
$v_{n-1}(X)\subset\p^{\binom{2n+1}{n-1}-1}$ is an isomorphism. Then
we can repeat the proof of Theorem \ref{thm:main}, having in mind
that a strict $n$-secant rational normal curve $v_{n-1}(L)$ of
degree $n-1$ to $v_{n-1}(X)$, can be uniquely lifted to an
$n$-secant rational normal curve of degree $n-1$ to $X'$ contained
in the cone $C_P(v_{n-1}(\p^{n+2}))$.
\end{proof}

In view of Lemma \ref{lem:n-uple}, we can also define the true
$n$-tuple curve of $X\subset\p^{n+2}$ as the $1$-dimensional
component of the true $n$-tuple locus. Then the last assumption of
Theorem \ref{thm:general} holds by Lemma \ref{lem:strict}, and we
can also generalize Corollary \ref{cor:main} in the following way:

\begin{cor}\label{cor:general}
Let $X\subset\p^{n+2}$ be an integral subvariety of dimension $n\geq 4$. If
the true $n$-tuple curve of $X$ is non-empty and irreducible, then
$X$ is $(n-1)$-normal.
\end{cor}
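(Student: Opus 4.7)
The plan is to imitate the derivation of Corollary \ref{cor:main} from Theorem \ref{thm:main}, transplanted into the higher-dimensional setting of Theorem \ref{thm:general}. The basic strategy is: assuming the true $n$-tuple curve of $X$ is non-empty and irreducible, one produces an irreducible family of true $n$-secant lines that fills $\p^{n+2}$ and whose general element is strict, and then feeds this family into the argument of Theorem \ref{thm:general} in place of the hypothesis ``$n$-tuple locus non-empty, irreducible and not $(n+1)$-tuple''.

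Concretely, I would first invoke Lemma \ref{lem:n-uple}: the true $n$-tuple locus $\gamma_P$ is at most $1$-dimensional for a general $P\in\p^{n+2}$, so by the non-emptiness and irreducibility hypothesis $\gamma_P$ is exactly $1$-dimensional. Hence the incidence variety of pairs (true $n$-secant line, point on it) is an irreducible $(n+2)$-dimensional family $\{L_z\}_{z\in Z}$ whose points fill $\p^{n+2}$. Applying Lemma \ref{lem:strict} to $Z$ shows that a general $L_z$ is a strict $n$-secant line to $X$; in particular $L_z\cap X$ has length exactly $n$ and, since $L_z$ is a \emph{true} $n$-secant line, is supported on $X_{\sm}$.

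With this input I would rerun the three steps of the proof of Theorem \ref{thm:general} verbatim, replacing every use of ``general $n$-secant line in the $n$-tuple locus'' by ``general $L_z$ in this distinguished family''. The one place where the strictness intervenes is Step 2: given a strict true $n$-secant $L_z$, the image $v_{n-1}(L_z\cap X)$ is a length-$n$ subscheme of $v_{n-1}(X)$, its preimage $\xi':=\pi_P|_{X'}^{-1}(v_{n-1}(L_z\cap X))$ has length $n$ and spans a unique $\p^{n-1}_{\xi'}\subset\p^{\binom{2n+1}{n-1}}$, and the intersection of $\p^{n-1}_{\xi'}$ with the cone $C_P(v_{n-1}(\p^{n+2}))$ cuts out a unique rational normal curve of degree $n-1$ projecting to $v_{n-1}(L_z)$. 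This unique-lifting is exactly what is needed so that Step 3 goes through: it produces a degree $d>1$ covering $\pi_P|_{V'}\colon V'\to v_{n-1}(\p^{n+2})$ and, for general $Q\in\p^{n+2}$ with preimages $Q_1,\dots,Q_d\in V'$, forces the true $n$-tuple loci $\gamma'_{Q_{i_0}}$ and $\gamma'_{Q_j}$ on $X'$ to differ for some $i_0$. Since $\pi_P|_{X'}$ is an isomorphism, their images yield at least two irreducible components of $\gamma_Q\subset X$, contradicting the irreducibility assumption.

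The only genuine point requiring care—and thus the main obstacle—is confirming that the ``true'' hypothesis supplies enough regularity for the lifting in Step 2 to be unique: one must ensure that the length-$n$ scheme $\xi'\subset X'$ is non-degenerate in its $(n-1)$-plane, which follows from $L_z\cap X\subset X_{\sm}$ together with the fact that $v_{n-1}(L_z)$ is a rational normal curve of degree $n-1$. Once this is in place, every other ingredient of Theorem \ref{thm:general} transcribes without change.
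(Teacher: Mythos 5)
Your proposal is correct and follows essentially the same route as the paper: the authors likewise note that Lemma \ref{lem:n-uple} makes the true $n$-tuple curve well defined, that Lemma \ref{lem:strict} supplies the strictness of the general true $n$-secant line (i.e.\ the ``not $(n+1)$-tuple'' hypothesis), and then rerun the proof of Theorem \ref{thm:general} with this $(n+2)$-dimensional family of true $n$-secant lines, exactly as in the passage from Theorem \ref{thm:main} to Corollary \ref{cor:main}. Your extra care about the unique lifting of the strict $n$-secant rational normal curve in Step 2 matches the one point the paper itself singles out in the sketch of Theorem \ref{thm:general}.
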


According to Hartshorne's Conjecture, smooth subvarieties
$X\subset\p^{n+2}$ of dimension $n\geq 4$ are expected to be
complete intersections (and hence projectively normal). However, we
remark that Corollary \ref{cor:smooth} can be also extended to
higher dimensions thanks to \cite{ran2}:

\begin{cor}
Let $X\subset\p^{n+2}$ be a smooth $n$-fold, $n\geq 4$. If $X$ is
not $(n-1)$-normal, then the $n$-tuple curve of $X$ is reducible.
\end{cor}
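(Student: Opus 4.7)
The plan is to mimic the two-step argument that produced Corollary~\ref{cor:smooth}, replacing the Veronese $v_2$ by $v_{n-1}$ and the input from \cite{kwak} by its higher-dimensional analogue in \cite{ran2}. I would argue by contrapositive: assume that the $n$-tuple curve of $X$ is irreducible (allowing it to be empty) and deduce that $X$ must be $(n-1)$-normal. Since $X$ is smooth we have $X=X_{\sm}$, so the $n$-tuple curve and the true $n$-tuple curve automatically coincide, and no subtlety about smooth points intrudes.

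If the $n$-tuple curve of $X$ is non-empty, then by the irreducibility hypothesis we are directly in the setting of Corollary~\ref{cor:general}, which gives $(n-1)$-normality at once. No additional work is required in this case.

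If instead the $n$-tuple curve of $X$ is empty, I would invoke the higher-dimensional version of \cite[Lemma 3.3 and Theorem 3.4~(a)]{kwak} established in \cite{ran2}: for a smooth $n$-fold $X\subset\p^{n+2}$ with empty $n$-tuple curve one obtains $H^0(\p^{n+2},\II_X(n-1))\neq 0$, so $X$ lies on a hypersurface of degree $n-1$. Combined with the corresponding Kwak-Wahl-type projective normality statement (playing the role that \cite[Theorem 1]{KW} played in the threefold case, again supplied by \cite{ran2} in higher dimension), this forces $X$ to be $(n-1)$-normal. In both cases the contrapositive is established.

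The main obstacle I anticipate is not geometric but bibliographic: one has to isolate precisely which statements in \cite{ran2} function as the Kwak-type ``if empty $n$-tuple curve then contained in a degree $(n-1)$ hypersurface'', and as the projective normality criterion, and then verify that the numerical thresholds (a hypersurface of degree $n-1$ on the one side and the $n$-tuple curve on the other) match as required by the inductive pattern set by the $n=3$ case. Once those references are pinned down, the proof is essentially formal and strictly parallel to that of Corollary~\ref{cor:smooth}.
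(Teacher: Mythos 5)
Your first case is exactly the paper's: when the $n$-tuple curve is non-empty and irreducible, Corollary~\ref{cor:general} applies directly (and your observation that smoothness makes the true $n$-tuple curve coincide with the $n$-tuple curve is correct). The gap is in the empty case. You propose to extract from \cite{ran2} two statements modelled on the $n=3$ pattern: first, that an empty $n$-tuple curve forces $H^0(\p^{n+2},\II_X(n-1))\neq 0$, and second, a Kwak--Wahl-type criterion asserting that lying on a hypersurface of degree $n-1$ implies projective normality. Neither statement is in \cite{ran2}, and neither is known in this generality; you flag this yourself as a ``bibliographic'' obstacle, but it is in fact a missing mathematical ingredient, since the higher-dimensional argument does not run parallel to the chain ``no apparent triple points $\Rightarrow$ on a quadric $\Rightarrow$ projectively normal'' used for threefolds via \cite{kwak} and \cite{KW}.

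What the paper actually does in the empty case is structurally different. The hypothesis $n\geq 4$ is used through Barth and Larsen's theorems to conclude that $X\subset\p^{n+2}$ is subcanonical, hence so is a general hyperplane section $Y=X\cap H\subset\p^{n+1}$; the emptiness of the $n$-tuple locus of $Y$ (which follows because the $n$-tuple locus of $X$ is at most $0$-dimensional) is then fed into the Proposition of \cite{ran2}, giving $\Sigma_n=\emptyset$ and hence $e(k)=d-k\nu+k^2=0$ for some $k\in\{1,\dots,n-1\}$, i.e.\ $\nu=\frac{d}{k}+k$ where $\bigwedge N_{Y/\p^{n+1}}\cong\OO_Y(\nu)$. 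This numerical equality is precisely the hypothesis under which the Theorem of \cite{ran2} shows that $Y$, and hence $X$, is a complete intersection, which gives projective normality and in particular $(n-1)$-normality. So the correct mechanism is ``empty $n$-tuple locus $\Rightarrow$ Ran's numerical condition $\Rightarrow$ complete intersection,'' with subcanonicality as an essential intermediate step that your outline omits; without it, Ran's results do not apply and your second case does not close.
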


\begin{proof}
If the $n$-tuple curve of $X$ is non-empty the result follows from
Corollary \ref{cor:general}. On the other hand, if the $n$-tuple
curve of $X$ is empty then it follows from \cite{ran2} that
$X\subset\p^{n+2}$ is a complete intersection. For the reader's
convenience, we explain this point in detail. Let $Y=X\cap H$ be a
general hyperplane section. It follows from Barth and Larsen's
theorems that $X\subset\p^{n+2}$ is subcanonical, whence
$Y\subset\p^{n+1}$ is also subcanonical. Note that the $n$-tuple
locus of $Y$ is empty since the $n$-tuple locus of $X$ is at most
$0$-dimensional. Then we apply \cite[Proposition]{ran2} to
$Y\subset\p^{n+1}$. Since $\Sigma_n=\emptyset$, it follows that
$e(k):=d-k\nu+k^2=0$ for some $k\in\{1,\dots,n-1\}$, where $d$ is
the degree of $Y\subset\p^{n+1}$ and $\bigwedge
N_{Y/\p^{n+1}}\cong\OO_Y(\nu)$. Then $\nu=\frac{d}{k}+k$. We
remark that \cite[Theorem]{ran2} holds under the weaker hypothesis
$\nu\geq\frac{d}{\alpha}+\alpha$ for some $\alpha\in (0,n-1]$, as
the author pointed out in the beginning of the proof. Therefore
$Y\subset\p^{n+1}$ (and hence $X\subset\p^{n+2}$) is a complete
intersection.
\end{proof}

\begin{acks} The authors wish to thank Fyodor Zak for let them know
the unpublished article \cite{zak}. The first two authors would like
to thank Fabrizio Catanese for interesting discussions. The third
author wishes to thank the warm hospitality of the Department of
Mathematics when he visited the University of Trieste on February
2007, as well as Angelo F. Lopez for introducing him to this topic.
Finally, the authors would like to thank the anonymous referee for
many helpful comments and suggestions. \end{acks}


\end{document}